\newtheorem{theorem}{Theorem}[section]
\newtheorem{proposition}[theorem]{Proposition}
\newtheorem{corollary}[theorem]{Corollary}
\theoremstyle{definition}
\theoremstyle{remark}
\newtheorem{remark}[theorem]{Remark}
\numberwithin{equation}{section}
\begin{document}


\title[Finite Wavelet Frames over Prime Fields]{A Constructive Approach to the Finite Wavelet Frames over Prime Fields}

\author[ A. Rahimi and N. Seddighi]{Asghar Rahimi and Niloufar Seddighi}
\address{Asghar Rahimi, Faculty Mathematics, University of Maragheh, Maragheh, Iran.}
\email{rahimi@maragheh.ac.ir}
\address{Niloufar Seddighi, Faculty Mathematics, University of Maragheh, Maragheh, Iran.}
\email{stu\_seddighi@maragheh.ac.ir}

%
%
%

%

\begin{abstract}
 In this article, we present a constructive method for computing the frame coefficients of finite wavelet frames
over prime fields using tools from computational harmonic analysis and group theory.\\\\
\textbf{Keywords:}  Finite wavelet frames, finite wavelet group, prime fields.\\
\textbf{MSC(2010):}  Primary 42C15, 42C40, 65T60; Secondary 30E05, 30E10.
\end{abstract}

\maketitle

\section{\bf Introduction}

The mathematical theory of frames in Hilbert spaces was introduced in \cite{duf.sch}, and has been studied
in depth for finite dimensions in \cite{P.Bal, ff}.
Finite frames have been applied as means to interpret periodic signals and
privileged in areas ranging from digital signal processing to image analysis, filter banks, big data and compressed
sensing, see \cite{ff, san.vinc, strang} and references therein. The representations of a  function/signal in
time-frequency (resp. time-scale) domain are obtained through analyzing the signal
with respect to an over complete system whose elements are localized in time and frequency (resp. scale) \cite{arefi.z1, arefi.z2, AGHF1, AGHF.KAM}.

In the framework of wave packet analysis \cite{Chris.Rahimi},
finite wavelet systems are particular classes of finite wave
packet systems which have been introduced recently in \cite{AGHF.wpt.ff, AGHF.wpt.n, AGHF.cwps.p}.
The analytic structure of finite wavelet frames over prime fields (finite Abelian groups of prime orders)
has been studied in \cite{AGHF.SFWFPF, AGHF.MMP}.
The notion of a wavelet transform over a prime field was introduced in
\cite{f.g.h.t} and extended for finite fields in \cite{f.m.s, j}. This notion is
an example of the abstract generalization of wavelet transforms using harmonic analysis,
see \cite{Arefi1, Arefi2, cai, Ten, fu05, san.vinc} and
classical references therein.
The current paper consists of a constructive approach to the abstract aspects
of nature of finite wavelet
systems over prime fields. The motivation of this paper is to
establish an alternative constructive formulation for the wavelet coefficients of
finite wavelet frames over prime fields.

This article is organized as follows.
Section 2 introduces some notations as well as a brief review of Fourier transform on finite
cyclic groups, periodic signal processing, and finite frames.
Then in section 3 we present a constructive technique for computing the frame coefficients of finite wavelet frames
over prime fields using tools from computational harmonic analysis and theoretical group theory.
In addition, we shall also give a constructive characterization for frame conditions of
finite wavelet systems over prime fields using matrix analysis terminology.

\section{\bf Preliminaries}

This section is devoted to present a brief review of notations, basics, and preliminaries of
Fourier analysis and computational harmonic analysis over finite cyclic groups,
for any details we refer the readers to \cite{Pf} and classical references therein.
It should be mentioned that in this article $p$ is a positive prime integer.
We also employ notations of the author of the references
\cite{AGHF.SFWFPF, AGHF.wpt.ff, AGHF.wpt.n}.

For a finite  group $G$, the complex vector space
$\mathbb{C}^G=\{\mathbf{x}:G\to\mathbb{C}\}$ is a $|G|$-dimensional vector space with
complex vector entries indexed by elements in the finite group $G$.
The inner product of $\mathbf{x},\mathbf{y}\in\mathbb{C}^G$ is defined by
$\langle\mathbf{x},\mathbf{y}\rangle=\sum_{g\in G}\mathbf{x}(g)\overline{\mathbf{y}(g)}$,
and the induced norm is the $\|.\|_2$-norm of $\mathbf{x}$.
For $\mathbb{C}^{\mathbb{Z}_p}$,
where $\mathbb{Z}_p$ denotes the cyclic group of $p$ elements  $\{0,\ldots,p-1\}$, we write
$\mathbb{C}^p$. The notation $\|\mathbf{x}\|_0=|\{k\in\mathbb{Z}_p:\mathbf{x}(k)\not=0\}|$
counts non-zero entries in $\mathbf{x}\in\mathbb{C}^p$.
The translation operator $T_k:\mathbb{C}^p\to\mathbb{C}^p$ is
$T_k\mathbf{x}(s)=\mathbf{x}(s-k)$ for $\mathbf{x}\in\mathbb{C}^p$ and $k,s\in\mathbb{Z}_p$.
The modulation operator $M_\ell:\mathbb{C}^p\to\mathbb{C}^p$ is given by
$M_\ell\mathbf{x}(s)=e^{-2\pi i\ell s/p}\mathbf{x}(s)$ for $\mathbf{x}\in\mathbb{C}^p$ and
$\ell,s\in\mathbb{Z}_p$. The translation and modulation operators on the Hilbert space
$\mathbb{C}^p$ are unitary operators in the $\|.\|_2$-norm. For $\ell,k\in\mathbb{Z}_p$ we have
$(T_k)^*=(T_k)^{-1}=T_{p-k}$ and $(M_{\ell})^*=(M_{\ell})^{-1}=M_{p-\ell}$. The unitary discrete
Fourier Transform (DFT) of a 1D discrete signal $\mathbf{x}\in\mathbb{C}^p$ is defined by
$\widehat{\mathbf{x}}(\ell)=\frac{1}{\sqrt{p}}\sum_{k=0}^{p-1}\mathbf{x}(k)\overline{\mathbf{w}_{\ell}(k)},$
for all $\ell\in\mathbb{Z}_p$, where for all
$\ell,k\in\mathbb{Z}_p$ we have $\mathbf{w}_{\ell}(k) = e^{2\pi i\ell k/p}.$
Thus, DFT of $\mathbf{x}\in\mathbb{C}^p$ at $\ell\in \mathbb{Z}_p$ is
\begin{equation}\label{DFT}
\widehat{\mathbf{x}}(\ell)=\mathcal{F}_p(\mathbf{x})(\ell)
=\frac{1}{\sqrt{p}}\sum_{k=0}^{p-1}\mathbf{x}(k)\overline{\mathbf{w}_{\ell}(k)}
=\frac{1}{\sqrt{p}}\sum_{k=0}^{p-1}\mathbf{x}(k)e^{-2\pi i\ell k/p}.
\end{equation}
The DFT is a unitary transform in $\|.\|_2$-norm, i.e. for all
$\mathbf{x}\in \mathbb{C}^p$ satisfies the Parseval formula
$\|\mathcal{F}_{p}(\mathbf{x})\|_{2}=\|\mathbf{x}\|_{2}$.
The Polarization identity implies the Plancherel formula $\langle\mathbf{x},\mathbf{y}\rangle
=\langle\widehat{\mathbf{x}},\widehat{\mathbf{y}}\rangle$ for $\mathbf{x},\mathbf{y}\in\mathbb{C}^p$.
The unitary DFT (\ref{DFT}) satisfies
$\widehat{T_k\mathbf{x}}=M_k\widehat{\mathbf{x}}$, $\widehat{M_\ell\mathbf{x}}=T_{p-\ell}\widehat{\mathbf{x}}$,
for $\mathbf{x}\in\mathbb{C}^p$ and $k,\ell\in\mathbb{Z}_p$.
Also the inverse discrete Fourier formula (IDFT) for a 1D discrete signal
$\mathbf{x}\in\mathbb{C}^p$ is given by
\begin{equation*}
\mathbf{x}(k)=\frac{1}{\sqrt{p}}\sum_{\ell=0}^{p-1}\widehat{\mathbf{x}}(\ell)\mathbf{w}_{\ell}(k)=
\frac{1}{\sqrt{p}}\sum_{\ell=0}^{p-1}\widehat{\mathbf{x}}(\ell)e^{2\pi i\ell k/p},\ \ 0\le k\le p-1.
\end{equation*}

A finite sequence $\mathfrak{A}=\{\mathbf{y}_j:0\le j\le M-1\}\subset\mathbb{C}^p$
is called a frame (or finite frame) for $\mathbb{C}^p$,
if there exists a positive constant $0<A<\infty$ such that  \cite{ff}
\begin{equation*}\label{f.c}
A\|\mathbf{x}\|_{2}^2\le \sum_{j=0}^{M-1}|\langle
\mathbf{x},\mathbf{y}_j\rangle|^2,\  \  \ (\mathbf{x}\in\mathbb{C}^{p}).
\end{equation*}
 The synthesis operator
$F:\mathbb{C}^M\to\mathbb{C}^p$ is defined by
 $F\{c_j\}_{j=0}^{M-1}=\sum_{j=0}^{M-1}c_j\mathbf{y}_j$ for all
$\{c_j\}_{j=0}^{M-1}\in\mathbb{C}^M$.
The adjoint operator $F^*:\mathbb{C}^p\to\mathbb{C}^M$ is defined by
$F^*\mathbf{x}=\{\langle\mathbf{x},\mathbf{y}_j\rangle\}_{j=0}^{M-1}$ for all $\mathbf{x}\in\mathbb{C}^p$.
If $\mathfrak{A}=\{\mathbf{y}_j:0\le j\le M-1\}$ is a frame for $\mathbb{C}^p$, by composing $F$ and $F^*$, we get
the positive and invertible frame operator $S:\mathbb{C}^p\to\mathbb{C}^p$ given by
\begin{equation*}\label{S}
\mathbf{x}\mapsto S\mathbf{x}=FF^*\mathbf{x}
=\sum_{j=0}^{M-1}\langle\mathbf{x},\mathbf{y}_j\rangle\mathbf{y}_j,\  \  \  \ (\mathbf{x}\in\mathbb{C}^{p}),
\end{equation*}
and
the set $\mathfrak{A}$ spans the complex Hilbert space $\mathbb{C}^p$ which implies $M\ge p$, where $M=|\mathfrak{A}|$.
Each finite spanning set in $\mathbb{C}^p$ is a finite frame for $\mathbb{C}^p$. The ratio
between $M$ and $p$ is called the redundancy of the finite frame $\mathfrak{A}$ (i.e. ${\rm red}_{\mathfrak{A}}=M/p$).
If $\mathfrak{A}$ is a finite frame for $\mathbb{C}^p$, each
 $\mathbf{x}\in\mathbb{C}^p$
satisfies
\begin{equation*}\label{f.r}
\mathbf{x}=\sum_{j=0}^{M-1}\langle\mathbf{x},S^{-1}\mathbf{y}_j\rangle\mathbf{y}_j
=\sum_{j=0}^{M-1}\langle\mathbf{x},\mathbf{y}_j\rangle S^{-1}\mathbf{y}_j.
\end{equation*}

\section{\bf Construction of Wavelet Frames over Prime Fields}

In this section, we briefly state structure and basic properties of
cyclic dilation operators, see \cite{f.g.h.t, AGHF.SFWFPF, AGHF, j}.
Then we present an overview for the notion and structure of
finite wavelet groups over prime fields \cite{ff0, ram.val}.

\subsection{\bf Structure of Finite Wavelet Group over Prime Fields}

The set
\begin{equation*}
\mathbb{U}_p:=\mathbb{Z}_p-\{0\}=\{1,...,p-1\},
\end{equation*}
is a finite multiplicative Abelian group of order $p-1$
with respect to the multiplication module $p$ with the identity element ${1}$.
The multiplicative inverse for $m\in\mathbb{U}_p$
is $m_p$ which satisfies $m_pm+np=1$ for some $n\in\mathbb{Z}$,
see \cite{Hardy, Rie}.

For $m\in\mathbb{U}_p$,  the cyclic dilation operator is defined by
$D_m:\mathbb{C}^p\to\mathbb{C}^p$ by
\begin{align*}
D _m\mathbf{x}(k):=\mathbf{x}(m_pk)
\end{align*}
for $\mathbf{x}\in\mathbb{C}^p$ and $k\in\mathbb{Z}_p$,
where $m_p$ is the multiplicative inverse of $m$ in $\mathbb{U}_p$.

In the following propositions, we state some basic properties of cyclic dilations.

\begin{proposition}\label{prop30}
{\it Let $p$ be a positive prime integer. Then
\begin{enumerate}[\normalfont (i)]
\item For $(m,k)\in\mathbb{U}_p\times\mathbb{Z}_p$, we have $D_mT_k=T_{mk}D_m$.
\item For $m,m'\in\mathbb{U}_p$, we have $D_{mm'}=D_mD_{m'}$.
\item For $(m,k),(m',k')\in\mathbb{U}_p\times\mathbb{Z}_p$, we have
$T_{k+mk'}D_{mm'}=T_kD_mT_{k'}D_{m'}$.
\item For $(m,\ell)\in \mathbb{U}_p\times\mathbb{Z}_p$, we have $D_mM_\ell=M_{m_p\ell}D_m$.
\end{enumerate}
}\end{proposition}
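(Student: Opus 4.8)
The plan is to verify each of the four identities by a direct pointwise computation: evaluate both sides at an arbitrary $s\in\mathbb{Z}_p$, expand using the definitions of $T_k$, $M_\ell$, and $D_m$, and invoke the defining relation $m_pm\equiv 1\pmod p$ of the multiplicative inverse. Throughout, all index arithmetic is read modulo $p$, so every element of $\mathbb{C}^p$ is automatically $p$-periodic in its argument; this periodicity is exactly what lets one replace an argument such as $m_ps-m_pmk$ by $m_ps-k$.

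For (i), I would compute $D_mT_k\mathbf{x}(s)=T_k\mathbf{x}(m_ps)=\mathbf{x}(m_ps-k)$ on the left, and $T_{mk}D_m\mathbf{x}(s)=D_m\mathbf{x}(s-mk)=\mathbf{x}\bigl(m_p(s-mk)\bigr)=\mathbf{x}(m_ps-m_pmk)$ on the right; since $m_pmk\equiv k\pmod p$, the two coincide. For (ii), the one preliminary fact I need is that $m\mapsto m_p$ is a homomorphism of the group $\mathbb{U}_p$, i.e. $(mm')_p=m_pm'_p$, which follows from $(m_pm'_p)(mm')=(m_pm)(m'_pm')=1$ and uniqueness of multiplicative inverses; then $D_{mm'}\mathbf{x}(k)=\mathbf{x}\bigl((mm')_pk\bigr)=\mathbf{x}(m'_pm_pk)=D_{m'}\mathbf{x}(m_pk)=D_mD_{m'}\mathbf{x}(k)$.

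Part (iii) I would not prove from scratch but derive by composing the previous parts with the additivity $T_kT_{k'}=T_{k+k'}$ of translation: starting from $T_kD_mT_{k'}D_{m'}=T_k\bigl(D_mT_{k'}\bigr)D_{m'}$, apply (i) to the middle factor to get $T_kT_{mk'}D_mD_{m'}$, then combine $T_kT_{mk'}=T_{k+mk'}$ with (ii) to obtain $T_{k+mk'}D_{mm'}$. For (iv), I would again argue pointwise: $D_mM_\ell\mathbf{x}(s)=M_\ell\mathbf{x}(m_ps)=e^{-2\pi i\ell m_ps/p}\mathbf{x}(m_ps)$, whereas $M_{m_p\ell}D_m\mathbf{x}(s)=e^{-2\pi i(m_p\ell)s/p}D_m\mathbf{x}(s)=e^{-2\pi i m_p\ell s/p}\mathbf{x}(m_ps)$, and the two exponents are literally equal.

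I do not expect any genuine obstacle; the computations are routine. The only places that deserve a moment's attention are the homomorphism property $(mm')_p=m_pm'_p$ used in (ii) and implicitly in (iii), and the repeated, silent appeal to $p$-periodicity when simplifying arguments modulo $p$. Everything else is bookkeeping with the operator definitions.
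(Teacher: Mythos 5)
Your proof is correct: all four pointwise computations check out, the reduction $m_pmk\equiv k\pmod p$ is used exactly where needed, and deriving (iii) from (i), (ii) and $T_kT_{k'}=T_{k+k'}$ is the right economy. The paper states this proposition without proof (deferring to the cited references on cyclic dilations), so there is nothing to compare against; your direct verification is the standard argument and fills that gap adequately.
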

The next result states some properties of cyclic dilations.

\begin{proposition}\label{prop31}
{\it Let $p$ be a positive prime integer and $m\in\mathbb{U}_p$.
Then
\begin{enumerate}[\normalfont (i)]
\item The dilation operator $D_m:\mathbb{C}^p\to\mathbb{C}^p$ is a $*$-homomorphism.
\item The dilation operator $D_m:\mathbb{C}^p\to\mathbb{C}^p$ is
unitary in $\|.\|_2$-norm and satisfies
\[
(D_m)^*=(D_m)^{-1}=D_{m_p}.
\]
\item For $\mathbf{x}\in\mathbb{C}^p$, we have
$\widehat{D_m\mathbf{x}}=D_{m_p}\widehat{\mathbf{x}}$.
\end{enumerate}
}\end{proposition}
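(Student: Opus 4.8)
The plan is to verify the three assertions of Proposition~\ref{prop31} directly from the definition $D_m\mathbf{x}(k)=\mathbf{x}(m_pk)$, using Proposition~\ref{prop30} where it saves work. For part~(i), I would check that $D_m$ respects pointwise multiplication and complex conjugation of signals: for $\mathbf{x},\mathbf{y}\in\mathbb{C}^p$ and $k\in\mathbb{Z}_p$ one has $D_m(\mathbf{x}\mathbf{y})(k)=(\mathbf{x}\mathbf{y})(m_pk)=\mathbf{x}(m_pk)\mathbf{y}(m_pk)=D_m\mathbf{x}(k)\,D_m\mathbf{y}(k)$, and similarly $D_m(\overline{\mathbf{x}})(k)=\overline{\mathbf{x}(m_pk)}=\overline{D_m\mathbf{x}(k)}$; linearity is equally immediate, so $D_m$ is a unital $*$-homomorphism of the commutative $*$-algebra $\mathbb{C}^p$.

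For part~(ii), I would first observe that $D_m$ is a bijection of $\mathbb{C}^p$ because multiplication by $m_p$ is a bijection of $\mathbb{Z}_p$ (as $m\in\mathbb{U}_p$ is invertible), and then compute $D_{m_p}D_m=D_{m_pm}=D_1=\mathrm{Id}$ using Proposition~\ref{prop30}(ii) together with $m_pm\equiv 1\pmod p$; symmetrically $D_mD_{m_p}=\mathrm{Id}$, giving $(D_m)^{-1}=D_{m_p}$. To see $D_m$ is unitary, I would compute $\langle D_m\mathbf{x},D_m\mathbf{y}\rangle=\sum_{k\in\mathbb{Z}_p}\mathbf{x}(m_pk)\overline{\mathbf{y}(m_pk)}$ and then reindex the sum by the substitution $s=m_pk$, which ranges over all of $\mathbb{Z}_p$ exactly once as $k$ does, yielding $\langle\mathbf{x},\mathbf{y}\rangle$; hence $D_m$ is an isometry in the $\|.\|_2$-norm, and being also invertible it is unitary. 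Combining unitarity with the computed inverse gives $(D_m)^*=(D_m)^{-1}=D_{m_p}$.

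For part~(iii), I would compute the DFT of $D_m\mathbf{x}$ from the definition~(\ref{DFT}):
\begin{equation*}
\widehat{D_m\mathbf{x}}(\ell)=\frac{1}{\sqrt{p}}\sum_{k=0}^{p-1}D_m\mathbf{x}(k)\,e^{-2\pi i\ell k/p}
=\frac{1}{\sqrt{p}}\sum_{k=0}^{p-1}\mathbf{x}(m_pk)\,e^{-2\pi i\ell k/p},
\end{equation*}
then substitute $s=m_pk$, so that $k=ms$ (mod $p$, interpreting the product via the group $\mathbb{U}_p$ extended by $0$), and as $k$ runs over $\mathbb{Z}_p$ so does $s$. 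The exponent becomes $e^{-2\pi i\ell m s/p}=e^{-2\pi i(m\ell)s/p}$, so the sum equals $\frac{1}{\sqrt{p}}\sum_{s=0}^{p-1}\mathbf{x}(s)e^{-2\pi i(m\ell)s/p}=\widehat{\mathbf{x}}(m\ell)=\widehat{\mathbf{x}}\big((m_p)_p\,\ell\big)=D_{m_p}\widehat{\mathbf{x}}(\ell)$, where I use $(m_p)_p=m$. This is exactly the claimed identity $\widehat{D_m\mathbf{x}}=D_{m_p}\widehat{\mathbf{x}}$.

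None of the three parts presents a genuine obstacle; the only point requiring a little care is the bookkeeping of multiplicative inverses in the reindexing steps of~(ii) and~(iii) — in particular correctly tracking that the change of variables $s=m_pk$ corresponds to $k=ms$ and that the character index transforms by the \emph{same} $m$ rather than by $m_p$, which is what produces a dilation by $m_p$ (not by $m$) on the Fourier side. One should also note at the outset that multiplication by $0$ is fixed by $D_m$, so the substitution $k\mapsto m_pk$ really is a permutation of all of $\mathbb{Z}_p$, not merely of $\mathbb{U}_p$.
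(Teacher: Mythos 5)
Your proof is correct. Note that the paper itself gives no proof of Proposition~\ref{prop31} (it is stated as a known basic property of cyclic dilations, with the reader referred to the cited references), so there is nothing to compare against; your direct verification --- pointwise computation for the $*$-homomorphism property, the reindexing $s=m_pk$ for unitarity and for the DFT identity, with the inverses $(m_p)_p=m$ tracked correctly --- is exactly the standard argument one would supply.
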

The underlying set
$$\mathbb{U}_p\times\mathbb{Z}_p=\left\{(m,k):m\in\mathbb{U}_p, k\in\mathbb{Z}_p\right\},$$
equipped with the following group operations
\begin{equation*}
(m,k)\ltimes (m',k'):=(mm',k+mk'),
\end{equation*}
\begin{equation*}
(m,k)^{-1}:=(m_p,m_p.(p-k)),
\end{equation*}
is a finite non-Abelian group of order $p(p-1)$ denoted by $\mathbb{W}_p$ and it is called as
finite affine group on $p$ integers or
 the finite wavelet group associated to the integer $p$ or simply as $p$-wavelet group.

Next proposition states basic properties of the finite wavelet group $\mathbb{W}_p$.

\begin{proposition}
Let $p>2$ be a positive prime integer. Then $\mathbb{W}_p$ is a non-Abelian group of order $p(p-1)$ which contains
 a copy of $\mathbb{Z}_p$ as a normal Abelian subgroup and a copy of $\mathbb{U}_p$
 as a non-normal Abelian subgroup.
\end{proposition}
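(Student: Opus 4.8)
The plan is to verify each assertion directly from the explicit group law $(m,k)\ltimes(m',k')=(mm',k+mk')$ together with the inverse formula $(m,k)^{-1}=(m_p,m_p(p-k))$, where $m_p$ denotes the multiplicative inverse of $m$ modulo $p$. The order is immediate: as a set $\mathbb{W}_p=\mathbb{U}_p\times\mathbb{Z}_p$, so $|\mathbb{W}_p|=|\mathbb{U}_p|\,|\mathbb{Z}_p|=(p-1)p$. For the non-Abelian claim I would compare $(m,k)\ltimes(m',k')$ with $(m',k')\ltimes(m,k)$: since multiplication modulo $p$ is commutative the first coordinates agree, while the second coordinates are $k+mk'$ and $k'+m'k$. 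Taking $(m,0)$ and $(1,1)$ gives the products $(m,m)$ and $(m,1)$; these differ in $\mathbb{Z}_p$ as soon as $m\neq 1$, and such an $m\in\mathbb{U}_p$ exists precisely because $p>2$ (e.g. $m=2$). Hence $\mathbb{W}_p$ is non-Abelian, and this computation also explains why $p=2$ must be excluded.

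Next I would produce the normal copy of $\mathbb{Z}_p$. Set $N:=\{(1,k):k\in\mathbb{Z}_p\}$. From the group law $(1,k)\ltimes(1,k')=(1,k+k')$ and $(1,k)^{-1}=(1,p-k)$, so $N$ is closed under the operation and inversion and $k\mapsto(1,k)$ is a group isomorphism of $\mathbb{Z}_p$ onto $N$; in particular $N$ is Abelian. For normality I would compute a general conjugate $(m,k)\ltimes(1,j)\ltimes(m,k)^{-1}$: either directly or via Proposition~\ref{prop30}(iii) one gets $(m,k)\ltimes(1,j)=(m,k+mj)$, and multiplying by $(m_p,m_p(p-k))$ produces first coordinate $mm_p\equiv1$ and second coordinate $k+mj+m\,m_p(p-k)\equiv k+mj+(p-k)\equiv mj\pmod p$. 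Thus the conjugate is $(1,mj)\in N$, so $N\trianglelefteq\mathbb{W}_p$.

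Finally I would exhibit the non-normal copy of $\mathbb{U}_p$. Set $H:=\{(m,0):m\in\mathbb{U}_p\}$. Since $(m,0)\ltimes(m',0)=(mm',0)$ and $(m,0)^{-1}=(m_p,0)$, $H$ is a subgroup and $m\mapsto(m,0)$ is an isomorphism of $\mathbb{U}_p$ onto $H$, so $H$ is Abelian. To see $H$ is not normal, conjugate $(m',0)\in H$ by $(1,1)\in\mathbb{W}_p$: using $(1,1)^{-1}=(1,p-1)$ one computes $(1,1)\ltimes(m',0)=(m',1)$ and then $(m',1)\ltimes(1,p-1)=(m',1+m'(p-1))\equiv(m',1-m')\pmod p$. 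Choosing $m'\in\mathbb{U}_p$ with $m'\neq1$, which is again possible since $p>2$, makes $1-m'\not\equiv0\pmod p$, so the conjugate $(m',1-m')$ does not lie in $H$; hence $H$ is not normal in $\mathbb{W}_p$.

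Every step is a short computation in modular arithmetic, so there is no genuine obstacle here; the only point requiring care is the normality verification for $N$, namely checking that the second coordinate of the conjugate collapses to $mj$ modulo $p$ using $mm_p\equiv1$ and $p\equiv0$, and dually singling out the conjugating element $(1,1)$ that witnesses the failure of normality for $H$.
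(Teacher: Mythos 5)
Your proof is correct in every detail: the order count, the commutator witness $(m,0)$ versus $(1,1)$ requiring $p>2$, the conjugation computation showing $(m,k)\ltimes(1,j)\ltimes(m,k)^{-1}=(1,mj)$, and the failure of normality for $\{(m,0)\}$ via conjugation by $(1,1)$ all check out against the stated group law and inverse formula. The paper states this proposition without any proof (it is the standard fact that $\mathbb{W}_p$ is the semidirect product $\mathbb{Z}_p\rtimes\mathbb{U}_p$), so there is nothing to compare against; your direct verification is exactly the argument one would expect and supplies the omitted details.
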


\subsection{\bf Wavelet Frames over Prime Fields}

A finite wavelet system for the complex Hilbert space $\mathbb{C}^p$ is
a family or system of the form
\begin{equation*}
\mathcal{W}({\mathbf{y}},\Delta)
:=\left\{\sigma(m,k)\mathbf{y}=T_kD_m\mathbf{y}:(m,k)\in\Delta\subseteq\mathbb{W}_p\right\},
\end{equation*}
for some window signal $\mathbf{y}\in\mathbb{C}^p$ and a subset $\Delta$ of $\mathbb{W}_p$.

If $\Delta=\mathbb{W}_p$, we put $\mathcal{W}(\mathbf{y}):=\mathcal{W}(\mathbf{y},\mathbb{W}_p)$
and it is called a full finite wavelet system over $\mathbb{Z}_p$.
A finite wavelet system which spans $\mathbb{C}^p$ is a frame
and is referred to as a finite wavelet frame over the prime field $\mathbb{Z}_p$.

If $\mathbf{y}\in\mathbb{C}^p$ is a window signal then for $\mathbf{x}\in\mathbb{C}^p$, we have
\begin{equation*}\label{w.r.i}
\langle\mathbf{x},\sigma(m,k){\mathbf{y}}\rangle
=\langle\mathbf{x},T_kD_m{\mathbf{y}}\rangle
=\langle T_{p-k}\mathbf{x},D_m{\mathbf{y}}\rangle,\ \ \  \ ((m,k)\in\mathbb{W}_p).
\end{equation*}

The following proposition states a formulation for wavelet coefficients via Fourier transform.

\begin{proposition}\label{w.f.rep}
{\it Let $\mathbf{x},\mathbf{y}\in\mathbb{C}^p$ and $(m,k)\in\mathbb{W}_{p}$. Then,
$$\langle\mathbf{x},\sigma(m,k){\mathbf{y}}\rangle
=\sqrt{p}\mathcal{F}_p(\widehat{\mathbf{x}}.\overline{\widehat{D_m{\mathbf{y}}}})(p-k).$$
}\end{proposition}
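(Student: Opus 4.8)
The plan is to start from the expression $\langle\mathbf{x},\sigma(m,k)\mathbf{y}\rangle = \langle\mathbf{x},T_kD_m\mathbf{y}\rangle$ and apply the Plancherel formula $\langle\mathbf{x},\mathbf{z}\rangle=\langle\widehat{\mathbf{x}},\widehat{\mathbf{z}}\rangle$ with $\mathbf{z}=T_kD_m\mathbf{y}$. Using $\widehat{T_k\mathbf{z}}=M_k\widehat{\mathbf{z}}$, we obtain $\langle\mathbf{x},T_kD_m\mathbf{y}\rangle=\langle\widehat{\mathbf{x}},M_k\widehat{D_m\mathbf{y}}\rangle$. The point of passing to the Fourier side is that the translation $T_k$ becomes a pointwise modulation by $\mathbf{w}_{-k}$ (up to the sign convention), and a modulation is exactly what a DFT produces when evaluated at a shifted index. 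So I would write the inner product as an explicit sum over $\ell\in\mathbb{Z}_p$.

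Concretely, I would expand
\[
\langle\widehat{\mathbf{x}},M_k\widehat{D_m\mathbf{y}}\rangle
=\sum_{\ell=0}^{p-1}\widehat{\mathbf{x}}(\ell)\,\overline{M_k\widehat{D_m\mathbf{y}}(\ell)}
=\sum_{\ell=0}^{p-1}\widehat{\mathbf{x}}(\ell)\,\overline{e^{-2\pi i k\ell/p}\,\widehat{D_m\mathbf{y}}(\ell)}
=\sum_{\ell=0}^{p-1}\widehat{\mathbf{x}}(\ell)\,\overline{\widehat{D_m\mathbf{y}}(\ell)}\,e^{2\pi i k\ell/p}.
\]
Now set $\mathbf{u}:=\widehat{\mathbf{x}}\cdot\overline{\widehat{D_m\mathbf{y}}}\in\mathbb{C}^p$, the pointwise product. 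Then the last sum is $\sum_{\ell=0}^{p-1}\mathbf{u}(\ell)\,e^{2\pi i k\ell/p}$. Comparing with the definition \eqref{DFT} of the unitary DFT, $\mathcal{F}_p(\mathbf{u})(j)=\frac{1}{\sqrt p}\sum_{\ell=0}^{p-1}\mathbf{u}(\ell)e^{-2\pi i j\ell/p}$, I recognize the sum as $\sqrt{p}\,\mathcal{F}_p(\mathbf{u})(-k)$, and since indices live in $\mathbb{Z}_p$ we have $-k\equiv p-k$. This yields precisely
\[
\langle\mathbf{x},\sigma(m,k)\mathbf{y}\rangle
=\sqrt{p}\,\mathcal{F}_p\big(\widehat{\mathbf{x}}\cdot\overline{\widehat{D_m\mathbf{y}}}\big)(p-k),
\]
which is the claimed identity.

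The only genuine care-point, rather than a real obstacle, is bookkeeping of sign conventions: matching the $e^{-2\pi i k\ell/p}$ coming from $M_k$ against the $e^{-2\pi i j\ell/p}$ in the DFT kernel, and confirming that the resulting argument is $p-k$ rather than $k$. One also has to be comfortable that complex conjugation distributes over the product and that $\overline{\,\overline{\widehat{D_m\mathbf{y}}(\ell)}\,}$ is handled correctly when it sits inside the conjugate of the inner product's second slot. Everything else is a direct application of Plancherel, the intertwining relation $\widehat{T_k\mathbf{x}}=M_k\widehat{\mathbf{x}}$, and the definition of $\mathcal{F}_p$; no use of Proposition~\ref{prop31}(iii) is strictly needed here, since $D_m\mathbf{y}$ can be left as a single signal, though one could substitute $\widehat{D_m\mathbf{y}}=D_{m_p}\widehat{\mathbf{y}}$ if a formula purely in terms of $\widehat{\mathbf{y}}$ were desired.
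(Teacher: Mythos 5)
Your argument is correct and complete: Plancherel, the intertwining relation $\widehat{T_k\mathbf{z}}=M_k\widehat{\mathbf{z}}$, and the observation that $e^{-2\pi i(p-k)\ell/p}=e^{2\pi ik\ell/p}$ combine exactly as you describe, and the sign bookkeeping with the paper's convention $M_k\mathbf{z}(\ell)=e^{-2\pi ik\ell/p}\mathbf{z}(\ell)$ works out. The paper itself offers no proof here, deferring entirely to Proposition 4.1 of the cited reference, so there is no in-text argument to compare against; your derivation is the standard one that reference uses, and you are also right that Proposition \ref{prop31}(iii) is not needed since $D_m\mathbf{y}$ can be treated as an opaque signal.
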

\begin{proof}
See Proposition 4.1 of \cite{AGHF.SFWFPF}.
\end{proof}

In \cite{AGHF.SFWFPF} using an analytic approach the author has presented a
concrete formulation for the $\|.\|_2$-norm of wavelet coefficients,
the formula of which is just stated hereby.

\begin{theorem}\label{f.f.s}
Let $p$ be a positive prime integer, $M$  a divisor of $p-1$ and
$\mathbb{M}$ be a multiplicative subgroup of $\mathbb{U}_p$ of order $M$.
Let $\mathbf{y}\in\mathbb{C}^p$ be a window signal and $\mathbf{x}\in\mathbb{C}^p$. Then,
\begin{align*}
&\sum_{m\in\mathbb{M}}\sum_{k\in\mathbb{Z}_p}|\langle\mathbf{x},\sigma(m,k)\mathbf{y}\rangle|^2
\\&=p\left(M|\widehat{{\mathbf{y}}}(0)|^2|\widehat{\mathbf{x}}(0)|^2
+\left(\sum_{m\in\mathbb{M}}|\widehat{{\mathbf{y}}}(m)|^2\right)\left(\sum_{\ell\in\mathbb{M}}|\widehat{\mathbf{x}}(\ell)|^2\right)
+\sum_{\ell\in\mathbb{U}_p-\mathbb{M}}\gamma_\ell(\mathbf{y},\mathbb{M})|\widehat{\mathbf{x}}(\ell)|^2\right),
\end{align*}
where
\[
\gamma_\ell(\mathbf{y},\mathbb{M}):=\sum_{m\in\mathbb{M}}|\widehat{\mathbf{y}}(m\ell)|^2,\ \ \ \ \ (\ell\in\mathbb{U}_p-\mathbb{M}).
\]
\end{theorem}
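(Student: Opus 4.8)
The plan is to start from Proposition~\ref{w.f.rep}, which expresses each wavelet coefficient as a value of a Fourier transform, and then sum the squared moduli using the Plancherel formula. Concretely, for fixed $m\in\mathbb{M}$ I would write
\[
\sum_{k\in\mathbb{Z}_p}|\langle\mathbf{x},\sigma(m,k)\mathbf{y}\rangle|^2
=p\sum_{k\in\mathbb{Z}_p}\bigl|\mathcal{F}_p(\widehat{\mathbf{x}}\cdot\overline{\widehat{D_m\mathbf{y}}})(p-k)\bigr|^2
=p\,\|\widehat{\mathbf{x}}\cdot\overline{\widehat{D_m\mathbf{y}}}\|_2^2,
\]
where the last equality is Parseval for $\mathcal{F}_p$ together with the fact that $k\mapsto p-k$ is a bijection of $\mathbb{Z}_p$. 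Then I would expand the norm as $\sum_{\ell\in\mathbb{Z}_p}|\widehat{\mathbf{x}}(\ell)|^2|\widehat{D_m\mathbf{y}}(\ell)|^2$ and invoke Proposition~\ref{prop31}(iii), $\widehat{D_m\mathbf{y}}=D_{m_p}\widehat{\mathbf{y}}$, so that $|\widehat{D_m\mathbf{y}}(\ell)|^2=|\widehat{\mathbf{y}}(m\ell)|^2$ (using $D_{m_p}\widehat{\mathbf{y}}(\ell)=\widehat{\mathbf{y}}((m_p)_p\ell)=\widehat{\mathbf{y}}(m\ell)$). This gives
\[
\sum_{m\in\mathbb{M}}\sum_{k\in\mathbb{Z}_p}|\langle\mathbf{x},\sigma(m,k)\mathbf{y}\rangle|^2
=p\sum_{m\in\mathbb{M}}\sum_{\ell\in\mathbb{Z}_p}|\widehat{\mathbf{x}}(\ell)|^2\,|\widehat{\mathbf{y}}(m\ell)|^2 .
\]

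Next I would interchange the two finite sums and split the inner index set $\mathbb{Z}_p=\{0\}\sqcup\mathbb{M}\sqcup(\mathbb{U}_p-\mathbb{M})$ according to the position of $\ell$. For $\ell=0$: each term is $|\widehat{\mathbf{x}}(0)|^2|\widehat{\mathbf{y}}(0)|^2$ and there are $M$ values of $m$, producing $M|\widehat{\mathbf{y}}(0)|^2|\widehat{\mathbf{x}}(0)|^2$. For $\ell\in\mathbb{M}$: since $\mathbb{M}$ is a group, $m\mapsto m\ell$ permutes $\mathbb{M}$, so $\sum_{m\in\mathbb{M}}|\widehat{\mathbf{y}}(m\ell)|^2=\sum_{m\in\mathbb{M}}|\widehat{\mathbf{y}}(m)|^2$, independent of $\ell\in\mathbb{M}$; summing over $\ell\in\mathbb{M}$ then yields $\bigl(\sum_{m\in\mathbb{M}}|\widehat{\mathbf{y}}(m)|^2\bigr)\bigl(\sum_{\ell\in\mathbb{M}}|\widehat{\mathbf{x}}(\ell)|^2\bigr)$. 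For $\ell\in\mathbb{U}_p-\mathbb{M}$: the inner sum is exactly $\gamma_\ell(\mathbf{y},\mathbb{M})=\sum_{m\in\mathbb{M}}|\widehat{\mathbf{y}}(m\ell)|^2$ by definition, giving the last term $\sum_{\ell\in\mathbb{U}_p-\mathbb{M}}\gamma_\ell(\mathbf{y},\mathbb{M})|\widehat{\mathbf{x}}(\ell)|^2$. Adding the three pieces and factoring out $p$ reproduces the claimed identity.

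The only genuinely delicate point is the bookkeeping in the reduction $|\widehat{D_m\mathbf{y}}(\ell)|^2=|\widehat{\mathbf{y}}(m\ell)|^2$: one must be careful that the multiplicative inverse appears correctly, i.e.\ that $D_{m_p}$ evaluated at $\ell$ involves $(m_p)_p=m$ rather than $m_p$, and that all products $m\ell$ are taken modulo $p$ inside $\mathbb{Z}_p$. Everything else — Parseval, the change of variable $k\mapsto p-k$, interchanging finite sums, and using that left multiplication by $m$ permutes the subgroup $\mathbb{M}$ — is routine. I would therefore organize the write-up as: (1) per-$m$ Parseval reduction; (2) the dilation-on-the-Fourier-side identity; (3) the three-way splitting of the $\ell$-sum and the group-translation argument on $\mathbb{M}$.
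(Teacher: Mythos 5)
Your argument is correct. Note, however, that the paper does not actually prove Theorem~\ref{f.f.s} in the text: it simply cites Theorem~4.2 of the reference \cite{AGHF.SFWFPF}. So there is no in-paper proof to compare against line by line. The closest internal comparison is the paper's proof of Theorem~\ref{f.f.s.coset}, and there your steps (1) and (2) --- the per-$m$ application of Proposition~\ref{w.f.rep} plus Parseval/Plancherel and the reduction $|\widehat{D_m\mathbf{y}}(\ell)|^2=|\widehat{\mathbf{y}}(m\ell)|^2$ via Proposition~\ref{prop31} --- are exactly what the paper does to arrive at $p\sum_{m\in\mathbb{M}}\sum_{\ell}|\widehat{\mathbf{x}}(\ell)|^2|\widehat{\mathbf{y}}(m\ell)|^2$. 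The only divergence is the final bookkeeping: the paper partitions $\mathbb{U}_p$ into the cosets $H_t=\epsilon^t\mathbb{M}$ and uses that $m\mapsto m\ell$ bijects $\mathbb{M}$ onto $H_t$ for $\ell\in H_t$, whereas you split $\mathbb{Z}_p=\{0\}\sqcup\mathbb{M}\sqcup(\mathbb{U}_p-\mathbb{M})$ and absorb the last block into $\gamma_\ell$; your grouping matches the stated form of Theorem~\ref{f.f.s} directly, while the coset grouping yields the refined Theorem~\ref{f.f.s.coset} (of which Theorem~\ref{f.f.s} is the $t=0$ term made explicit). Your identification of the delicate point --- that $D_{m_p}\widehat{\mathbf{y}}(\ell)=\widehat{\mathbf{y}}((m_p)_p\ell)=\widehat{\mathbf{y}}(m\ell)$, with products taken mod $p$ --- is exactly right, and the rest (the bijection $k\mapsto p-k$, interchanging finite sums, left translation permuting $\mathbb{M}$) is routine as you say.
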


\begin{proof}
 See Theorem 4.2 of \cite{AGHF.SFWFPF}.
\end{proof}

\begin{remark}
The formulation presented in Theorem \ref{f.f.s} is an analytic
formulation of wavelet coefficients associated to the subgroup $\mathbb{M}$.
In details, that formulation originated from an analytic
approach which was based on direct computations of cyclic dilations in the subgroup $\mathbb{M}$.
\end{remark}

In the following theorem, we present a constructive formulation for the $\|.\|_2$-norm of wavelet coefficients.
At the first, we need to prove some results.

\begin{proposition}\label{part}
Let $p$ be a positive prime integer, $M$  a divisor of $p-1$
and $\mathbb{M}$  a multiplicative subgroup of $\mathbb{U}_p$ of order $M$.
Let $\epsilon$ be a generator of the cyclic group $\mathbb{U}_p$ and
$a:=\frac{p-1}{M}$  the index of $\mathbb{M}$ in $\mathbb{U}_p$. Then
\begin{enumerate}[\normalfont (i)]
\item For $0\le r,s\le a-1$, we have $\epsilon^r\mathbb{M}=\epsilon^s\mathbb{M}$ iff $r=s$.
\item $\mathbb{U}_p/\mathbb{M}=\{\epsilon^t\mathbb{M}:0\le t\le a-1\}$.
\end{enumerate}
\end{proposition}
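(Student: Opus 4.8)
The plan is to reduce both statements to the standard structure theory of finite cyclic groups. Since $\mathbb{U}_p$ is cyclic of order $p-1$ with generator $\epsilon$, and $\mathbb{M}$ is its unique subgroup of order $M$, we have $\mathbb{M}=\langle\epsilon^{a}\rangle$ where $a=(p-1)/M$; this is the one nontrivial fact I would invoke, namely that a cyclic group has exactly one subgroup of each order dividing the group order, and that subgroup is generated by the appropriate power of any fixed generator. Everything else is bookkeeping with cosets.

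For part (i), I would argue in the contrapositive/directly via the coset criterion: $\epsilon^r\mathbb{M}=\epsilon^s\mathbb{M}$ if and only if $\epsilon^{r-s}\in\mathbb{M}=\langle\epsilon^{a}\rangle$, which holds if and only if $a\mid (r-s)$ in the exponent arithmetic, i.e. $r\equiv s \pmod a$ after accounting for the order of $\epsilon$ being $p-1=aM$. Since $0\le r,s\le a-1$, the congruence $r\equiv s\pmod a$ forces $r=s$. The only mild subtlety to spell out is that $\epsilon^{r-s}\in\langle\epsilon^a\rangle$ really does translate to $a\mid(r-s)$: if $\epsilon^{r-s}=\epsilon^{aj}$ then $p-1\mid (r-s-aj)$, and since $a\mid (p-1)$ we get $a\mid (r-s)$. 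I would present this as a short displayed chain of "if and only if"s.

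For part (ii), the inclusion $\{\epsilon^t\mathbb{M}:0\le t\le a-1\}\subseteq\mathbb{U}_p/\mathbb{M}$ is immediate since each $\epsilon^t\in\mathbb{U}_p$. For the reverse inclusion, take any coset $g\mathbb{M}$ with $g\in\mathbb{U}_p$; write $g=\epsilon^n$ for some $n\in\mathbb{Z}$ (possible since $\epsilon$ generates), and let $t$ be the residue of $n$ modulo $a$, so $0\le t\le a-1$ and $n=t+aj$; then $g\mathbb{M}=\epsilon^{t}\epsilon^{aj}\mathbb{M}=\epsilon^{t}\mathbb{M}$ because $\epsilon^{aj}\in\mathbb{M}$. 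Finally, by part (i) the $a$ cosets listed are pairwise distinct, so the list is exactly $\mathbb{U}_p/\mathbb{M}$ (which also reconfirms $|\mathbb{U}_p/\mathbb{M}|=a=(p-1)/M$, consistent with Lagrange's theorem).

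I do not anticipate a genuine obstacle here; the statement is elementary group theory. The only place to be careful is the interplay between "exponent arithmetic modulo $p-1$" and "divisibility by $a$," i.e. making sure the equivalence $\epsilon^{r-s}\in\mathbb{M}\iff a\mid(r-s)$ is justified rather than asserted, and making sure part (i) is proved before it is used to conclude the $a$ cosets in part (ii) are distinct.
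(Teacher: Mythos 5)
Your proof is correct, but it follows a different route from the paper's. For part (i), you identify $\mathbb{M}=\langle\epsilon^{a}\rangle$ (via the uniqueness of the order-$M$ subgroup of a cyclic group) and then run a clean chain of equivalences $\epsilon^{r}\mathbb{M}=\epsilon^{s}\mathbb{M}\iff\epsilon^{r-s}\in\mathbb{M}\iff a\mid(r-s)$, with the divisibility step justified through the order $p-1=aM$ of $\epsilon$. The paper instead argues by contradiction without ever identifying $\mathbb{M}$ explicitly: from $\epsilon^{r}\mathbb{M}=\epsilon^{s}\mathbb{M}$ with $s=r+r'$, $1\le r'\le a-1$, it deduces $\epsilon^{r'}\in\mathbb{M}$ and then rules this out by estimating the order of $\epsilon^{r'}$, asserting that this order equals $\frac{p-1}{r'}>M$ so that $\epsilon^{r'}$ cannot lie in a group of order $M$. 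That assertion is only literally correct when $r'\mid(p-1)$ (in general the order is $\frac{p-1}{\gcd(p-1,r')}$, which is still $>M$, so the paper's conclusion survives); your exponent-arithmetic route sidesteps this subtlety entirely, at the cost of invoking the classification of subgroups of cyclic groups. For part (ii), the paper simply counts: by (i) the $a$ listed cosets are pairwise distinct and the index is $a$, so they exhaust $\mathbb{U}_p/\mathbb{M}$; you instead prove surjectivity directly by writing $g=\epsilon^{n}$ and reducing $n$ modulo $a$, which is slightly longer but constructive and does not lean on the index count. Both arguments are complete; yours is arguably the more robust of the two.
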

\begin{proof}
(i)
First, suppose $\epsilon^r \mathbb{M}= \epsilon^ s \mathbb{M}$
for some $0\leq r,s\leq a-1.$ We will show
$r=s.$
For see this, assume to the contrary that $r \neq s$.
Without loss of generality, suppose $s > r$. Thus, there exists
\begin{align}\label{dis2}
1 \leq r' \leq a-1 < a,
\end{align}
such that
$s= r+r'$, and we get ${\epsilon}^{r} \mathbb{M}={\epsilon}^{r+r'} \mathbb{M}={\epsilon}^{r} {\epsilon}^{r'}\mathbb{M}$.
Since ${\epsilon}^{r}$ is an invertible element of $\mathbb{U}_{p}$,
so $\mathbb{M}={\epsilon}^{r'}\mathbb{M}$
and by this ${\epsilon}^{r'}\in \mathbb{M}.$
Also,
$\epsilon^{p-1}\stackrel{p}{\equiv}({\epsilon}^{r'})^{\frac{p-1}{r'}}\stackrel{p}{\equiv} 1 .$
Hence
the order of ${\epsilon}^{r'}$ in $\mathbb{U}_{p}$ can be at most
$\frac{p-1}{r'}$. Since  ${\epsilon}$ is a generator of
$\mathbb{U}_{p}$, the order of ${\epsilon}^{r'}$ is $\frac{p-1}{r'}$. In addition, by
(\ref{dis2}) we get $M < {\frac{p-1}{r'}}\leq p-1$, therefore
 ${\epsilon}^{r'}$ can not be in $\mathbb{M}$ which is a contradiction.
The inverse implication is straightforward.

(ii) According to (i), $\{\epsilon^t \mathbb{M} : 0\leq t \leq a-1\}$
is a set of disjoint cosets of $\mathbb{M}$ in $\mathbb{U}_p$ and the cardinality
of this set is equal to the index number of $\mathbb{M}$ in $\mathbb{U}_p$,
so the equality in (ii) holds. In particular, this set of cosets  creates a partition to $\mathbb{U}_p$.
\end{proof}
Next we present a constructive formula for $\|.\|_2$ of wavelet coefficients.
\begin{theorem}\label{f.f.s.coset}
Let $p$ be a positive prime integer, $M$  a divisor of $p-1$
and $\mathbb{M}$  a multiplicative subgroup of $\mathbb{U}_p$ of order $M$.
Let $\epsilon$ be a generator of the cyclic group $\mathbb{U}_p$ and
$a:=\frac{p-1}{M}$  the index of $\mathbb{M}$ in $\mathbb{U}_p$.
Let $\mathbf{y}\in\mathbb{C}^p$ be a window signal and
$\mathbf{x}\in\mathbb{C}^p$.
Then,
\begin{align*}\label{expan.coset}
&\sum_{m\in\mathbb{M}}\sum_{k=0}^{p-1}|\langle\mathbf{x},T_kD_m\mathbf{y}\rangle|^2
\\&\hspace{1cm}=p\bigg(M{|\widehat{\mathbf{x}}(0)|}^{2} {|\widehat{\mathbf{y}}(0)|}^{2}
+ \sum_{t=0}^{a-1}\bigg(\sum_{\ell\in H_t}{|\widehat{\mathbf{x}}(\ell)|}^{2}\bigg)
\bigg(\sum_{w\in H_t}{|\widehat{\mathbf{y}}(w)|}^{2}\bigg)\bigg),
\end{align*}
where $H_t:=\epsilon^t \mathbb{M}$ for all $0\le t\le a-1$.
\end{theorem}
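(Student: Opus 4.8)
The plan is to derive the claimed identity directly from Theorem~\ref{f.f.s}, whose left-hand side coincides with that of the present statement (recall $\sigma(m,k)\mathbf{y}=T_kD_m\mathbf{y}$ by definition). Hence it suffices to show that the right-hand side of Theorem~\ref{f.f.s} can be reorganized into the coset form asserted here. I would begin by isolating the three summands appearing in Theorem~\ref{f.f.s}: the term $M|\widehat{\mathbf{y}}(0)|^2|\widehat{\mathbf{x}}(0)|^2$ already matches the first term of the target formula verbatim.

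For the middle term, I would observe that $H_0=\epsilon^0\mathbb{M}=\mathbb{M}$, so the product $\left(\sum_{m\in\mathbb{M}}|\widehat{\mathbf{y}}(m)|^2\right)\left(\sum_{\ell\in\mathbb{M}}|\widehat{\mathbf{x}}(\ell)|^2\right)$ is precisely the $t=0$ summand of $\sum_{t=0}^{a-1}\left(\sum_{\ell\in H_t}|\widehat{\mathbf{x}}(\ell)|^2\right)\left(\sum_{w\in H_t}|\widehat{\mathbf{y}}(w)|^2\right)$. It then remains only to identify the third term $\sum_{\ell\in\mathbb{U}_p-\mathbb{M}}\gamma_\ell(\mathbf{y},\mathbb{M})|\widehat{\mathbf{x}}(\ell)|^2$ with $\sum_{t=1}^{a-1}\left(\sum_{\ell\in H_t}|\widehat{\mathbf{x}}(\ell)|^2\right)\left(\sum_{w\in H_t}|\widehat{\mathbf{y}}(w)|^2\right)$.

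To do this I would invoke Proposition~\ref{part}(ii): the cosets $H_1,\dots,H_{a-1}$ form a partition of $\mathbb{U}_p-\mathbb{M}$, so the sum over $\mathbb{U}_p-\mathbb{M}$ breaks as $\sum_{t=1}^{a-1}\sum_{\ell\in H_t}$. The key step is to show that $\ell\mapsto\gamma_\ell(\mathbf{y},\mathbb{M})$ is constant on each coset $H_t$, with value $\sum_{w\in H_t}|\widehat{\mathbf{y}}(w)|^2$: writing $\ell=\epsilon^t m_0$ with $m_0\in\mathbb{M}$ and using that multiplication by $m_0$ permutes $\mathbb{M}$, one gets $\gamma_\ell(\mathbf{y},\mathbb{M})=\sum_{m\in\mathbb{M}}|\widehat{\mathbf{y}}(m m_0\epsilon^t)|^2=\sum_{m'\in\mathbb{M}}|\widehat{\mathbf{y}}(m'\epsilon^t)|^2=\sum_{w\in H_t}|\widehat{\mathbf{y}}(w)|^2$, where the last equality uses that $\{m'\epsilon^t:m'\in\mathbb{M}\}=\epsilon^t\mathbb{M}=H_t$ (the group $\mathbb{U}_p$ being abelian). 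Pulling this constant out of the inner sum over $\ell\in H_t$ produces exactly the desired term, and collecting the three pieces finishes the proof. I expect this coset-constancy of $\gamma_\ell$ to be the only real content; the rest is bookkeeping with the decomposition $\mathbb{U}_p=\bigsqcup_{t=0}^{a-1}H_t$ and the isolation of the $t=0$ contribution.
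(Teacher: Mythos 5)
Your argument is correct, but it takes a different route from the paper's. You treat Theorem~\ref{f.f.s} as a black box and show that its right-hand side can be reassembled into the coset form: the $t=0$ summand absorbs the middle term (since $H_0=\mathbb{M}$), and the third term is regrouped using the partition $\mathbb{U}_p-\mathbb{M}=\bigcup_{t=1}^{a-1}H_t$ from Proposition~\ref{part} together with the observation that $\ell\mapsto\gamma_\ell(\mathbf{y},\mathbb{M})$ is constant on each coset $H_t$ with value $\sum_{w\in H_t}|\widehat{\mathbf{y}}(w)|^2$ (because multiplication by $m_0\in\mathbb{M}$ permutes $\mathbb{M}$). The paper instead re-derives the identity from scratch: it starts from Proposition~\ref{w.f.rep} and the Plancherel formula to get $\sum_{k}|\langle\mathbf{x},T_kD_m\mathbf{y}\rangle|^2=p\sum_{\ell}|\widehat{\mathbf{x}}(\ell)|^2\,|\widehat{\mathbf{y}}(m\ell)|^2$, splits off $\ell=0$, partitions $\mathbb{U}_p$ into the cosets $H_t$ via Proposition~\ref{part}, and uses the bijection $m\mapsto m\ell$ of $\mathbb{M}$ onto $H_t$ for $\ell\in H_t$ to factor the double sum. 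The essential combinatorial step --- the coset partition plus the invariance of $\sum_{m\in\mathbb{M}}|\widehat{\mathbf{y}}(m\ell)|^2$ along a coset --- is the same in both arguments; what differs is the starting point. Your version is shorter and makes transparent that the ``constructive'' formula is a mere repackaging of the analytic one, while the paper's version is self-contained modulo Proposition~\ref{w.f.rep} and does not depend on the correctness of the cited Theorem~\ref{f.f.s}.
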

\begin{proof}
Let $\mathbf{y}\in\mathbb{C}^p$ be a window function, $\mathbf{x}\in\mathbb{C}^p$ and
$m\in \mathbb{M}$. Then using Proposition \ref{w.f.rep} and Plancherel formula we have
\begin{align*}
\sum_{k=0}^{p-1}|\langle\mathbf{x},T_kD_m\mathbf{y}\rangle|^2
=p\sum_{\ell=0}^{p-1}|\widehat{\mathbf{x}}(\ell)|^2 . |\overline{\widehat{ {\mathbf{y}}}}(m \ell)|^2.
\end{align*}
Hence we achieve
\begin{align*}
\sum_{m\in \mathbb{M}}\sum_{k=0}^{p-1}|\langle\mathbf{x},T_kD_m\mathbf{y}\rangle|^2
&=\sum_{m\in \mathbb{M}}p \sum_{\ell=0}^{p-1}|\widehat{\mathbf{x}}(\ell)|^2.|\overline{\widehat{ {\mathbf{y}}}}(m \ell)|^2
\\&=p \sum_{\ell=0}^{p-1}\sum_{m\in \mathbb{M}}|\widehat{\mathbf{x}}(\ell)|^2.|\overline{\widehat{ {\mathbf{y}}}}(m \ell)|^2.
\end{align*}
Then we have
\begin{equation}\label{expan}
\sum_{\ell=0}^{p-1}|\widehat{\mathbf{x}}(\ell)|^2\bigg(\sum_{m\in \mathbb{M}}|\widehat{\mathbf{y}}(m\ell)|^2\bigg)
=|\widehat{\mathbf{x}}(0)|^2\bigg(\sum_{m\in \mathbb{M}}|\widehat{\mathbf{y}}(0)|^2\bigg)
+\sum_{\ell=1}^{p-1}|
\widehat{\mathbf{x}}(\ell)|^2\bigg(\sum_{m\in \mathbb{M}}|\widehat{\mathbf{y}}(m\ell)|^2\bigg).
\end{equation}
Now by (\ref{expan}) and Proposition \ref{part}, we get
\begin{align*}
\sum_{m\in \mathbb{M}}\sum_{\ell=0}^{p-1}|\widehat{\mathbf{x}}(\ell)|^2|\widehat{\mathbf{y}}(m\ell)|^2
&=M|\widehat{\mathbf{x}}(0)|^2|\widehat{\mathbf{y}}(0)|^2\\&+\sum_{t=0}^{a-1}\sum_{\ell\in H_t}|
\widehat{\mathbf{x}}(\ell)|^2\bigg(\sum_{m\in \mathbb{M}}|\widehat{\mathbf{y}}(m\ell)|^2\bigg).
\end{align*}
Replacing $m\ell$ with $w$ we get,
$w\stackrel{p}{\equiv} m\ell \in \mathbb{M}.H_t=\mathbb{M}.{\epsilon}^t\mathbb{M}= {\epsilon}^t\mathbb{M}=H_t.$
For each $\ell\in H_{t}$, the mapping $m\rightarrow m\ell$ defines a bijection of $M$ onto $H_t$, and thus we deduce that
\begin{align*}
\sum_{m\in \mathbb{M}}\sum_{\ell=0}^{p-1}|\widehat{\mathbf{x}}(\ell)|^2|\widehat{\mathbf{y}}(m\ell)|^2
&=M|\widehat{\mathbf{x}}(0)|^2|\widehat{\mathbf{y}}(0)|^2\\&+\sum_{t=0}^{a-1}\bigg(\sum_{\ell\in H_t}|
\widehat{\mathbf{x}}(\ell)|^2\bigg)\bigg(\sum_{w\in H_t}|\widehat{\mathbf{y}}(w)|^2\bigg).
\end{align*}
Therefore
\begin{align*}
&\sum_{m\in\mathbb{M}}\sum_{k=0}^{p-1}|\langle\mathbf{x},T_kD_m\mathbf{y}\rangle|^2
\\&=p\bigg(M{|\widehat{\mathbf{x}}(0)|}^{2} {|\widehat{\mathbf{y}}(0)|}^{2}
+ \sum_{t=0}^{a-1}\bigg(\sum_{\ell\in H_t}{|\widehat{\mathbf{x}}(\ell)|}^{2}\bigg)
\bigg(\sum_{w\in H_t}{|\widehat{\mathbf{y}}(w)|}^{2}\bigg)\bigg).
\end{align*}
\end{proof}
The following result is an immediate consequence of Theorem \ref{f.f.s.coset}.
\begin{corollary}\label{f.s}
{\it Let $p$ be a positive prime integer. Let $\mathbf{y}\in\mathbb{C}^p$ be a window
signal and $\mathbf{x}\in\mathbb{C}^p$.
Then
\begin{align*}
&\sum_{m\in\mathbb{U}_p}\sum_{k=0}^{p-1}|\langle\mathbf{x},T_kD_m\mathbf{y}\rangle|^2
\\&=p\bigg((p-1){|\widehat{\mathbf{x}}(0)|}^{2} {|\widehat{\mathbf{y}}(0)|}^{2}
+ \bigg(\sum_{\ell\in \mathbb{U}_p}{|\widehat{\mathbf{x}}(\ell)|}^{2}\bigg)
\bigg(\sum_{w\in \mathbb{U}_p}{|\widehat{\mathbf{y}}(w)|}^{2}\bigg)\bigg).
\end{align*}
}
\end{corollary}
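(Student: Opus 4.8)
The plan is to obtain Corollary \ref{f.s} as the special case of Theorem \ref{f.f.s.coset} in which $\mathbb{M}$ is taken to be the whole group $\mathbb{U}_p$. First I would observe that $\mathbb{U}_p$ is trivially a multiplicative subgroup of itself, of order $M = p-1$, and that $p-1$ is a divisor of $p-1$, so the hypotheses of Theorem \ref{f.f.s.coset} are satisfied for this choice. Consequently the index is $a = \frac{p-1}{M} = 1$, and by Proposition \ref{part}(ii) the quotient $\mathbb{U}_p/\mathbb{M}$ consists of the single coset $H_0 = \epsilon^0 \mathbb{M} = \mathbb{M} = \mathbb{U}_p$.

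Next I would note that, since the full wavelet system $\mathcal{W}(\mathbf{y})$ is indexed by $\Delta = \mathbb{W}_p = \mathbb{U}_p \times \mathbb{Z}_p$, the left-hand side of the claimed identity,
\[
\sum_{m\in\mathbb{U}_p}\sum_{k=0}^{p-1}|\langle\mathbf{x},T_kD_m\mathbf{y}\rangle|^2,
\]
is exactly the left-hand side of Theorem \ref{f.f.s.coset} with $\mathbb{M} = \mathbb{U}_p$. Substituting $M = p-1$, $a = 1$, and $H_0 = \mathbb{U}_p$ into the right-hand side of that theorem collapses the sum over $t$ to its single $t = 0$ term, yielding
\[
p\bigg((p-1)|\widehat{\mathbf{x}}(0)|^2|\widehat{\mathbf{y}}(0)|^2
+ \bigg(\sum_{\ell\in\mathbb{U}_p}|\widehat{\mathbf{x}}(\ell)|^2\bigg)
\bigg(\sum_{w\in\mathbb{U}_p}|\widehat{\mathbf{y}}(w)|^2\bigg)\bigg),
\]
which is precisely the asserted expression.

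Since Theorem \ref{f.f.s.coset} does all the work, there is no genuine obstacle here; the only points deserving a word of care are the verification that $\mathbb{U}_p$ legitimately plays the role of $\mathbb{M}$ and the bookkeeping that makes the outer sum degenerate to one term. As a sanity check I would also remark that the same identity follows directly from the intermediate formula $\sum_{m\in\mathbb{M}}\sum_{k=0}^{p-1}|\langle\mathbf{x},T_kD_m\mathbf{y}\rangle|^2 = p\sum_{\ell=0}^{p-1}\sum_{m\in\mathbb{M}}|\widehat{\mathbf{x}}(\ell)|^2|\widehat{\mathbf{y}}(m\ell)|^2$ derived inside the proof of Theorem \ref{f.f.s.coset}, by isolating the term $\ell = 0$ and using that, for each fixed $\ell \in \mathbb{U}_p$, the map $m \mapsto m\ell$ permutes $\mathbb{U}_p$; but I would present the one-line specialization of Theorem \ref{f.f.s.coset} as the actual proof.
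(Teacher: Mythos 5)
Your proof is correct and matches the paper's intent exactly: the paper offers no argument beyond declaring the corollary "an immediate consequence of Theorem \ref{f.f.s.coset}", and your specialization $\mathbb{M}=\mathbb{U}_p$, $M=p-1$, $a=1$, $H_0=\mathbb{U}_p$ is precisely that consequence spelled out. Nothing further is needed.
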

Applying Theorem \ref{f.f.s.coset}, we can present the following constructive characterization
of a given window signal $\mathbf{y}\in\mathbb{C}^p$ and a subgroup of
the finite wavelet group $\mathbb{W}_p$ to guarantee
that generated wavelet system is a frame for $\mathbb{C}^p$.
\begin{theorem}\label{m.f.result}
Let $p$ be a positive prime integer, $\epsilon$  a generator of
$\mathbb{U}_p$,
 $M$  a divisor of $p-1$,
$\mathbb{M}$  a multiplicative subgroup of $\mathbb{U}_p$ of order $M$ and
$a:=\frac{p-1}{M}$ the index of $\mathbb{M}$ in $\mathbb{U}_p$.
Let $\Delta_\mathbb{M}:=\mathbb{M}\times\mathbb{Z}_p$ and $\mathbf{y}\in\mathbb{C}^p$ be a non-zero window signal.
The finite wavelet system $\mathcal{W}(\mathbf{y},\Delta_\mathbb{M})$ is a frame for
$\mathbb{C}^p$ if and only if the following conditions hold
\begin{enumerate}[\normalfont (i)]
\item $\widehat{\mathbf{y}}(0)\neq 0$
\item For each $t\in \{0,...,a-1\}$, there exists $m_t\in \mathbb{M}$ such that  $\widehat{\mathbf{y}}(\epsilon^t m_t )\neq 0$.
\end{enumerate}
\end{theorem}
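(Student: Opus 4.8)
The plan is to derive everything from the explicit norm formula in Theorem \ref{f.f.s.coset}, applied to the subgroup $\mathbb{M}$. Recall that a finite family is a frame for $\mathbb{C}^p$ if and only if it spans $\mathbb{C}^p$, equivalently if and only if the lower frame inequality holds with some $A>0$, equivalently if and only if there is no nonzero $\mathbf{x}\in\mathbb{C}^p$ with $\sum_{m\in\mathbb{M}}\sum_{k=0}^{p-1}|\langle\mathbf{x},T_kD_m\mathbf{y}\rangle|^2=0$. So the strategy is: $\mathcal{W}(\mathbf{y},\Delta_\mathbb{M})$ fails to be a frame precisely when the right-hand side of the formula in Theorem \ref{f.f.s.coset} vanishes for some $\mathbf{x}\neq 0$, and I will show this happens exactly when (i) or (ii) fails.

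First I would write out the quantity from Theorem \ref{f.f.s.coset}:
\begin{equation*}
\Phi(\mathbf{x}):=p\left(M|\widehat{\mathbf{x}}(0)|^2|\widehat{\mathbf{y}}(0)|^2
+\sum_{t=0}^{a-1}\Big(\sum_{\ell\in H_t}|\widehat{\mathbf{x}}(\ell)|^2\Big)
\Big(\sum_{w\in H_t}|\widehat{\mathbf{y}}(w)|^2\Big)\right).
\end{equation*}
By Proposition \ref{part}(ii), the index set $\{0\}\cup\bigcup_{t=0}^{a-1}H_t$ is exactly $\mathbb{Z}_p$, a disjoint partition, so $\Phi$ is a sum of nonnegative terms, one ``block'' per coset plus the zero-frequency block. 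Set $c_0:=M|\widehat{\mathbf{y}}(0)|^2$ and, for $0\le t\le a-1$, $c_t':=\sum_{w\in H_t}|\widehat{\mathbf{y}}(w)|^2$. Then $\Phi(\mathbf{x})=p\big(c_0|\widehat{\mathbf{x}}(0)|^2+\sum_{t=0}^{a-1}c_t'\sum_{\ell\in H_t}|\widehat{\mathbf{x}}(\ell)|^2\big)$, a weighted sum of $|\widehat{\mathbf{x}}(0)|^2,\dots,|\widehat{\mathbf{x}}(p-1)|^2$ in which all frequencies in a common coset $H_t$ carry the same weight $c_t'$.

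For the ``if'' direction: assume (i) and (ii). Then $c_0>0$, and for each $t$, condition (ii) gives some $m_t\in\mathbb{M}$ with $\widehat{\mathbf{y}}(\epsilon^t m_t)\neq 0$; since $\epsilon^t m_t\in\epsilon^t\mathbb{M}=H_t$, the term $|\widehat{\mathbf{y}}(\epsilon^t m_t)|^2$ appears in the sum defining $c_t'$, so $c_t'>0$. Hence every weight is strictly positive, so $\Phi(\mathbf{x})\ge p\min(c_0,c_1',\dots,c_{a-1}')\|\widehat{\mathbf{x}}\|_2^2=p\min(\cdots)\|\mathbf{x}\|_2^2$ by Parseval; this is the lower frame bound, so $\mathcal{W}(\mathbf{y},\Delta_\mathbb{M})$ is a frame. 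For the ``only if'' direction I argue by contrapositive: if (i) fails then $c_0=0$, and taking $\mathbf{x}$ with $\widehat{\mathbf{x}}=\delta_0$ (i.e. $\mathbf{x}(k)=1/\sqrt p$ constant) gives $\Phi(\mathbf{x})=0$ with $\mathbf{x}\neq 0$; if (ii) fails for some index $t$ then $\widehat{\mathbf{y}}$ vanishes identically on $H_t$, so $c_t'=0$, and taking $\mathbf{x}$ with $\widehat{\mathbf{x}}$ supported in $H_t$ (pick any single frequency $\ell_0\in H_t$ and set $\widehat{\mathbf{x}}=\delta_{\ell_0}$) again yields $\Phi(\mathbf{x})=0$ with $\mathbf{x}\neq 0$. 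In either case the lower frame inequality fails for every $A>0$, so the system is not a frame. I should also note that one must justify that $\widehat{\mathbf{x}}$ can be prescribed arbitrarily — this is immediate since the DFT $\mathcal{F}_p$ is a unitary bijection of $\mathbb{C}^p$, so $\mathbf{x}:=\mathcal{F}_p^{-1}(\delta_{\ell_0})$ exists and is nonzero.

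The main subtlety — and the only place where Proposition \ref{part} really does work — is the bookkeeping that $\{0\}\cup H_0\cup\cdots\cup H_{a-1}$ is genuinely a partition of $\mathbb{Z}_p$ with each $H_t$ nonempty, so that the displayed sum has no missing or double-counted frequencies and the weight attached to each coset is well defined; this is exactly Proposition \ref{part}(i)--(ii). A second small point to handle cleanly is the equivalence ``frame $\iff$ lower bound $\iff$ no nonzero null vector for $\Phi$'': the first equivalence is the definition together with the standard fact (already recorded in Section 2, via the spanning characterization) that any finite spanning set is a frame, and the second is immediate because $\Phi(\mathbf{x})=\sum_{m\in\mathbb{M}}\sum_k|\langle\mathbf{x},T_kD_m\mathbf{y}\rangle|^2$ is a finite sum of nonnegative terms, so it is a positive-definite quadratic form on $\mathbb{C}^p$ precisely when it has no nontrivial zero. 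No hard estimates are needed; the whole proof is a transparent reading of the formula in Theorem \ref{f.f.s.coset} through the partition supplied by Proposition \ref{part}.
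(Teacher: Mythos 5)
Your proof is correct and follows essentially the same route as the paper: both directions rest on the norm identity of Theorem \ref{f.f.s.coset} together with the coset partition from Proposition \ref{part}, with the lower frame bound $p\min(c_0,c_0',\dots,c_{a-1}')$ matching the paper's constant $A=\min\{M|\sum_k\mathbf{y}(k)|^2,\,p\vartheta\}$, and the necessity direction using the same test vectors ($\widehat{\mathbf{x}}$ supported at $0$, respectively in a single coset $H_t$). The only difference is presentational — you phrase necessity as a contrapositive via nontrivial zeros of the quadratic form, while the paper applies the frame inequality directly — which changes nothing of substance.
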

\begin{proof}
\rm
Let $\mathbf{y}$ be a non-zero window signal which satisfies conditions (i), (ii). Then by definition of
$H_t$ we get
\begin{align*}
\vartheta:= \min_{t\in \{0,...,a-1\}}\left(\sum_{m\in \mathbb{M}} |\widehat{\mathbf{y}}(m\epsilon^t)|^2\right)
=\min_{t\in \{0,...,a-1\}}\left(\sum_{w\in H_t} |\widehat{\mathbf{y}}(w)|^2\right)\neq 0.
\end{align*}
Now, let $0< A < \infty$ be given by
\begin{equation*}\label{low}
A:=\min \bigg\{M \Big|\sum_{k=0}^{p-1} \mathbf{y}(k)\Big|^2, p \vartheta \bigg\}.
\end{equation*}
Using Theorem \ref{f.f.s.coset} for $\mathbf{x}\in \mathbb{C}^p$, we have
\begin{align*}
\sum_{m\in\mathbb{M}}\sum_{k=0}^{p-1}|\langle\mathbf{x},T_kD_m\mathbf{y}\rangle|^2
&=p M|\widehat{\mathbf{x}}(0)|^2|\widehat{\mathbf{y}}(0)|^2+ p\sum_{t=0}^{a-1}\bigg(\sum_{\ell\in H_t}|
\widehat{\mathbf{x}}(\ell)|^2\bigg)\bigg(\sum_{w\in H_t}|\widehat{\mathbf{y}}(w)|^2\bigg)
\\& \geq p M|\widehat{\mathbf{x}}(0)|^2|\widehat{\mathbf{y}}(0)|^2+ p \vartheta\sum_{t=0}^{a-1}\bigg(\sum_{\ell\in H_t}|
\widehat{\mathbf{x}}(\ell)|^2\bigg)
\\& \geq A |\widehat{\mathbf{x}}(0)|^2 + A \sum_{t=0}^{a-1}\bigg(\sum_{\ell\in H_t}|
\widehat{\mathbf{x}}(\ell)|^2\bigg)
\\&=A |\widehat{\mathbf{x}}(0)|^2 + A \sum_{\ell=1}^{p-1}
|\widehat{\mathbf{x}}(\ell)|^2\\& = A \|\widehat{\mathbf{x}}\|_{2}^2= A \|\mathbf{x}\|_{2}^2.
\end{align*}
which implies that the finite
wavelet system $\mathcal{W}(\mathbf{y},\Delta_\mathbb{M})$ is a frame for $\mathbb{C}^p$.

For the inverse implication, consider $\mathbf{y}\in \mathbb{C}^p$ be a non-zero window signal such that
the finite wavelet system $\mathcal{W}(\mathbf{y},\Delta_\mathbb{M})$ is a frame for
$\mathbb{C}^p$. Thus, there exists $ A_1>0$ such that
\begin{align*}
&\sum_{m\in\mathbb{M}}\sum_{k=0}^{p-1}|\langle\mathbf{x},T_kD_m\mathbf{y}\rangle|^2 \geq A_1 \|\mathbf{x}\|_{2}^2,\hspace{1cm}
(\mathbf{x}\in \mathbb{C}^p).
\end{align*}
Then by Theorem \ref{f.f.s.coset} we have
\begin{align}\label{A_2}
M|\widehat{\mathbf{y}}(0)|^2|\widehat{\mathbf{x}}(0)|^2+\sum_{t=0}^{a-1}\bigg(\sum_{\ell \in H_t}|\widehat{\mathbf{x}}(\ell)|^2\bigg)\bigg(\sum_{w \in H_t}|\widehat{\mathbf{y}}(w)|^2\bigg)
\geq A_2 \|\mathbf{x}\|_{2}^2
\end{align}
for all $\mathbf{x}\in \mathbb{C}^p$, where $A_2=\frac{A_1}{p}$.
Now let $\mathbf{x}'\in \mathbb{C}^p$ with
$
\widehat{\mathbf{x}}'(0)\neq 0
$
and
$\widehat{\mathbf{x}}'(\ell)=0$, for all $\ell\in\{1,...,p-1\}$.
Thus, by  (\ref{A_2}) we get $\widehat{\mathbf{y}}(0)\neq 0$.
Next, consider $\mathbf{x}_t\in \mathbb{C}^p$ be a non-zero vector in which for a fixed but arbitrary $t\in\{0,..., a-1\}$,
\begin{equation}\label{inv}
{\widehat{\mathbf{x}}}_t(\ell)=0,\forall\ell\not\in H_t.
\end{equation}
Hence (\ref{inv}) assures that
$\sum_{w\in H_t}|\widehat{\mathbf{y}}(w)|^2$ should be non-zero. Therefore
$\widehat{y}(w)\neq 0$
for some $w \in H_t$, which indicates that there exists $m_t\in \mathbb{M}$
such that $\widehat{\mathbf{y}}(m_t\epsilon^t)\neq 0.$
\end{proof}
The following result shows that for a large class of non-zero window signals
the finite wavelet system $\mathcal{W}(\mathbf{y})$ is a frame for
$\mathbb{C}^p$ with redundancy $p-1$.

\begin{corollary}\label{th11}
{\it Let $p$ be a positive prime integer and $\mathbf{y}\in\mathbb{C}^p$ be a non-zero window signal.
The finite wavelet system $\mathcal{W}(\mathbf{y})$ constitutes a frame for
$\mathbb{C}^p$ with the redundancy $p-1$ if and only if $\widehat{\mathbf{y}}(0)\not =0$
and $\|\widehat{\mathbf{y}}\|_0\ge 2$.}
\end{corollary}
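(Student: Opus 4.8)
The plan is to derive Corollary~\ref{th11} as the special case of Theorem~\ref{m.f.result} obtained by taking $\mathbb{M}=\mathbb{U}_p$, so that $M=p-1$, $a=(p-1)/M=1$, and $\Delta_{\mathbb{M}}=\mathbb{U}_p\times\mathbb{Z}_p=\mathbb{W}_p$; in this case $\mathcal{W}(\mathbf{y},\Delta_{\mathbb{M}})=\mathcal{W}(\mathbf{y})$. First I would note that the full wavelet system $\mathcal{W}(\mathbf{y})$ has exactly $|\mathbb{W}_p|=p(p-1)$ elements, so once we know it is a frame for the $p$-dimensional space $\mathbb{C}^p$, its redundancy is automatically $p(p-1)/p=p-1$; thus the content of the statement is precisely that $\mathcal{W}(\mathbf{y})$ is a frame if and only if $\widehat{\mathbf{y}}(0)\neq 0$ and $\|\widehat{\mathbf{y}}\|_0\geq 2$.

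Next I would unwind what conditions (i) and (ii) of Theorem~\ref{m.f.result} say when $a=1$. Condition (i) is literally $\widehat{\mathbf{y}}(0)\neq 0$. Condition (ii), with the single index $t=0$ and $H_0=\epsilon^0\mathbb{M}=\mathbb{U}_p$, asserts that there exists $m_0\in\mathbb{U}_p$ with $\widehat{\mathbf{y}}(m_0)\neq 0$, i.e. that $\widehat{\mathbf{y}}$ does not vanish identically on $\mathbb{U}_p=\{1,\dots,p-1\}$. I would then argue that the conjunction of (i) and (ii) is equivalent to ``$\widehat{\mathbf{y}}(0)\neq 0$ and $\|\widehat{\mathbf{y}}\|_0\geq 2$'': indeed if $\widehat{\mathbf{y}}(0)\neq 0$ then $\|\widehat{\mathbf{y}}\|_0\geq 1$, and $\|\widehat{\mathbf{y}}\|_0\geq 2$ is exactly the requirement that some further coordinate $\widehat{\mathbf{y}}(\ell)$ with $\ell\in\{1,\dots,p-1\}$ be non-zero, which is (ii). Conversely, (i) and (ii) together give a non-zero value at $0$ and a non-zero value at some $\ell\neq 0$, hence at least two non-zero Fourier coefficients.

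Having established this equivalence, I would simply invoke Theorem~\ref{m.f.result} (or, just as well, Corollary~\ref{f.s}, which gives the explicit frame inequality for $\mathbb{M}=\mathbb{U}_p$) to conclude. Using Corollary~\ref{f.s} directly is arguably cleaner: the sum $\sum_{m\in\mathbb{U}_p}\sum_{k=0}^{p-1}|\langle\mathbf{x},T_kD_m\mathbf{y}\rangle|^2$ equals $p\big((p-1)|\widehat{\mathbf{x}}(0)|^2|\widehat{\mathbf{y}}(0)|^2+(\sum_{\ell\in\mathbb{U}_p}|\widehat{\mathbf{x}}(\ell)|^2)(\sum_{w\in\mathbb{U}_p}|\widehat{\mathbf{y}}(w)|^2)\big)$, and one checks that this is bounded below by $A\|\mathbf{x}\|_2^2$ for some $A>0$ (uniformly in $\mathbf{x}$) precisely when both $|\widehat{\mathbf{y}}(0)|^2>0$ and $\sum_{w\in\mathbb{U}_p}|\widehat{\mathbf{y}}(w)|^2>0$; conversely, testing on an $\mathbf{x}$ supported in frequency at $0$ alone, and then on one supported in frequency on $\mathbb{U}_p$ alone, shows both positivity conditions are necessary.

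I do not foresee a genuine obstacle here: the corollary is a corollary. The only point requiring a little care is the bookkeeping that identifies $\|\widehat{\mathbf{y}}\|_0\geq 2$ with the pair of conditions in Theorem~\ref{m.f.result} when $a=1$, and the observation that ``frame for $\mathbb{C}^p$'' forces redundancy exactly $p-1$ because $|\mathbb{W}_p|=p(p-1)$ is fixed; both are immediate once spelled out.
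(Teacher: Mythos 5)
Your proposal is correct and is essentially the argument the paper intends: Corollary~\ref{th11} is the specialization of Theorem~\ref{m.f.result} to $\mathbb{M}=\mathbb{U}_p$ (so $M=p-1$, $a=1$, $H_0=\mathbb{U}_p$), where condition (ii) collapses to the existence of a non-zero $\widehat{\mathbf{y}}(\ell)$ with $\ell\neq 0$, which together with (i) is exactly $\widehat{\mathbf{y}}(0)\neq 0$ and $\|\widehat{\mathbf{y}}\|_0\ge 2$. The bookkeeping on the redundancy $p(p-1)/p=p-1$ and the alternative route via Corollary~\ref{f.s} are both sound.
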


The following corollary presents a characterization for finite wavelet systems
over prime fields using matrix analysis language.

\begin{corollary}\label{matrix.int}
Let $p$ be a positive prime integer, $\epsilon$  a generator of
$\mathbb{U}_p$,
 $M$  a divisor of $p-1$,
$\mathbb{M}$  a multiplicative subgroup of $\mathbb{U}_p$ of order $M$ and
$a:=\frac{p-1}{M}$ be the index of $\mathbb{M}$ in $\mathbb{U}_p$.
Let $\Delta_\mathbb{M}:=\mathbb{M}\times\mathbb{Z}_p$ and $\mathbf{y}\in\mathbb{C}^p$ be a non-zero window signal.
The finite wavelet system $\mathcal{W}(\mathbf{y},\Delta_\mathbb{M})$ is a frame for
$\mathbb{C}^p$ if and only if  $\widehat{\mathbf{y}}(0)\neq 0$ and the matrix ${\mathbf{Y}}({\mathbb{M},\mathbf{y}})$ of size $a\times M$ given by
$$
{\mathbf{Y}}({\mathbb{M},\mathbf{y}}):=\begin{bmatrix}
 \widehat{\mathbf {y}}(1)& \widehat{\mathbf {y}}(\epsilon^{a})& \cdots & \widehat{\mathbf {y}}(\epsilon^{(M-1)a}) \\
 \widehat{\mathbf {y}}(\epsilon^{1})& \widehat{\mathbf {y}}(\epsilon^{a+1})& \cdots & \widehat{\mathbf {y}}(\epsilon^{(M-1)a+1}) \\
 \vdots & \vdots & \ddots & \vdots\\
  \widehat{\mathbf {y}}(\epsilon^{a-1})& \widehat{\mathbf {y}}(\epsilon^{2a-1})&  \cdots&\widehat{\mathbf {y}}(\epsilon^{Ma-1})
 \end{bmatrix}_{a\times M}
 $$
is a matrix such that each row has at least a non-zero entry.
\end{corollary}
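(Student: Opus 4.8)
The plan is to read off Corollary~\ref{matrix.int} directly from Theorem~\ref{m.f.result} after making the coset structure of $\mathbb{M}$ in $\mathbb{U}_p$ fully explicit in terms of the generator $\epsilon$. First I would pin down $\mathbb{M}$ itself: since $\mathbb{U}_p=\langle\epsilon\rangle$ is cyclic of order $p-1$ and $a=(p-1)/M$ divides $p-1$, the element $\epsilon^a$ has order $M$, so $\langle\epsilon^a\rangle$ is a subgroup of order $M$. By the argument already used in the proof of Proposition~\ref{part}(i) (no $\epsilon^{r'}$ with $1\le r'\le a-1$ can lie in $\mathbb{M}$), together with the partition statement of Proposition~\ref{part}(ii), the coset of $\epsilon^a$ among $\{\epsilon^t\mathbb{M}:0\le t\le a-1\}$ must be $\mathbb{M}$ itself, hence $\epsilon^a\in\mathbb{M}$ and therefore $\mathbb{M}=\langle\epsilon^a\rangle=\{\epsilon^{ja}:0\le j\le M-1\}$. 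Consequently, for each $t$ the coset $H_t=\epsilon^t\mathbb{M}$ equals $\{\epsilon^{ja+t}:0\le j\le M-1\}$, and by Proposition~\ref{part} the family $\{H_t:0\le t\le a-1\}$ partitions $\mathbb{U}_p$.

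Next I would match this description with the matrix $\mathbf{Y}(\mathbb{M},\mathbf{y})$. Indexing its rows by $t=0,\dots,a-1$ and its columns by $j=0,\dots,M-1$, the $(t,j)$ entry is $\widehat{\mathbf{y}}(\epsilon^{ja+t})$; thus the $t$-th row is precisely the list $\bigl(\widehat{\mathbf{y}}(w)\bigr)_{w\in H_t}$, with $w=\epsilon^{ja+t}$ ranging over $H_t$ (the first row, $t=0$, starting at $\widehat{\mathbf{y}}(\epsilon^0)=\widehat{\mathbf{y}}(1)$, as in the stated matrix). In particular, the $t$-th row of $\mathbf{Y}(\mathbb{M},\mathbf{y})$ contains a non-zero entry if and only if $\widehat{\mathbf{y}}(w)\neq 0$ for some $w\in H_t$, equivalently $\widehat{\mathbf{y}}(\epsilon^t m)\neq 0$ for some $m\in\mathbb{M}$.

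Then I would simply invoke Theorem~\ref{m.f.result}: the system $\mathcal{W}(\mathbf{y},\Delta_\mathbb{M})$ is a frame for $\mathbb{C}^p$ if and only if $\widehat{\mathbf{y}}(0)\neq0$ and, for every $t\in\{0,\dots,a-1\}$, there is $m_t\in\mathbb{M}$ with $\widehat{\mathbf{y}}(\epsilon^t m_t)\neq0$. By the previous paragraph the latter condition is exactly the requirement that every row of $\mathbf{Y}(\mathbb{M},\mathbf{y})$ has at least one non-zero entry, which is the assertion of the corollary. As an alternative route that makes the role of Theorem~\ref{f.f.s.coset} explicit, one could rerun the lower-bound estimate from the proof of Theorem~\ref{m.f.result} and observe that $\sum_{w\in H_t}|\widehat{\mathbf{y}}(w)|^2>0$ says precisely that the $t$-th row of $\mathbf{Y}(\mathbb{M},\mathbf{y})$ is nonzero.

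I do not expect any genuine obstacle: the statement is a reformulation, and the only point needing care is the bookkeeping in the first two paragraphs — correctly identifying $\mathbb{M}$ with $\langle\epsilon^a\rangle$ and verifying that the row/column layout of $\mathbf{Y}(\mathbb{M},\mathbf{y})$ enumerates the cosets $H_0,\dots,H_{a-1}$ with no repetition or omission, which is exactly what Proposition~\ref{part} provides. Everything else is a direct appeal to Theorem~\ref{m.f.result}.
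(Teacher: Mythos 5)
Your proposal is correct and follows essentially the same route as the paper: identify $\mathbb{M}=\{1,\epsilon^a,\dots,\epsilon^{(M-1)a}\}$, observe that the $t$-th row of $\mathbf{Y}(\mathbb{M},\mathbf{y})$ lists the values of $\widehat{\mathbf{y}}$ on the coset $H_t=\epsilon^t\mathbb{M}$, and then invoke Theorem~\ref{m.f.result}. The only difference is that you supply an explicit justification that $\epsilon^a\in\mathbb{M}$ (hence $\mathbb{M}=\langle\epsilon^a\rangle$), a point the paper simply asserts; this is a welcome but minor elaboration, not a different argument.
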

\begin{proof}
Since
$\epsilon^a$ is a generator of $\mathbb{M}$
and $|\mathbb{M}|=M$,
 so
 \begin{equation}\label{m}
 \mathbb{M}=\{(\epsilon^a)^0,(\epsilon^a)^1,...,(\epsilon^a)^{M-1}\}=\{1,\epsilon^a,...,\epsilon^{(M-1)a}\}.
 \end{equation}
 By (\ref{m}) and definition of $H_t$,
 we have
 \begin{equation*}
 H_t=\{\epsilon^t,\epsilon^a.\epsilon^t,..., \epsilon^{(M-1)a}. \epsilon^t\}=\{\epsilon^t,\epsilon^{a+t},...,\epsilon^{(M-1)a+t}\}
 \end{equation*}
 for all $t\in\{0,...,a-1\}$. The entries of $t$-th row of ${\mathbf{Y}}({\mathbb{M},\mathbf{y}})$ are entries
 of $\widehat{\mathbf{y}}$ on $H_t$. Hence by Theorem \ref{m.f.result} the result holds.
\end{proof}

We can also deduce the following tight frame condition for finite wavelet systems
generated by subgroups of $\mathbb{W}_p$.

\begin{proposition}\label{tight}
{\it Let $p$ be a positive prime integer, $M$ a divisor of $p-1$ and
$\mathbb{M}$  a multiplicative subgroup of $\mathbb{U}_p$ of order $M$.
Let $\Delta_\mathbb{M}:=\mathbb{M}\times\mathbb{Z}_p$ and $\mathbf{y}\in\mathbb{C}^p$ be a non-zero window signal.
The finite wavelet system $\mathcal{W}(\mathbf{y},\Delta_\mathbb{M})$ is a tight frame for
$\mathbb{C}^p$ if and only if $\widehat{\mathbf{y}}(0)\not=0$ and
\begin{equation*}\label{t.f.c}
M \left|\sum_{k=0}^{p-1} \mathbf{y}(k)\right|^2
=p\left(\sum_{w\in H_t} |\widehat{\mathbf{y}}(w)|^2\right)
\end{equation*}
for all $t\in \{0,...,a-1\}$.
 In this case
\[
 \alpha_\mathbf{y}:=pM|\widehat{\mathbf{y}}(0)|^2=p\sum_{m\in\mathbb{M}}|\widehat{\mathbf{y}}(m)|^2
\]
is the frame bound.
}\end{proposition}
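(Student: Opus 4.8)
The plan is to derive the tight-frame characterization directly from the norm formula in Theorem \ref{f.f.s.coset}, since a frame $\mathcal{W}(\mathbf{y},\Delta_{\mathbb{M}})$ is tight with bound $\alpha$ precisely when $\sum_{m\in\mathbb{M}}\sum_{k=0}^{p-1}|\langle\mathbf{x},T_kD_m\mathbf{y}\rangle|^2=\alpha\|\mathbf{x}\|_2^2$ for every $\mathbf{x}\in\mathbb{C}^p$. Using Plancherel to write $\|\mathbf{x}\|_2^2=\|\widehat{\mathbf{x}}\|_2^2=|\widehat{\mathbf{x}}(0)|^2+\sum_{t=0}^{a-1}\sum_{\ell\in H_t}|\widehat{\mathbf{x}}(\ell)|^2$, and inserting the expression from Theorem \ref{f.f.s.coset}, the tightness condition becomes
\[
pM|\widehat{\mathbf{y}}(0)|^2|\widehat{\mathbf{x}}(0)|^2+p\sum_{t=0}^{a-1}\Big(\sum_{w\in H_t}|\widehat{\mathbf{y}}(w)|^2\Big)\Big(\sum_{\ell\in H_t}|\widehat{\mathbf{x}}(\ell)|^2\Big)=\alpha|\widehat{\mathbf{x}}(0)|^2+\alpha\sum_{t=0}^{a-1}\sum_{\ell\in H_t}|\widehat{\mathbf{x}}(\ell)|^2.
\]

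The key step is to argue that this identity holds for all $\mathbf{x}$ if and only if the coefficients of the "free parameters" $|\widehat{\mathbf{x}}(0)|^2$ and $\sum_{\ell\in H_t}|\widehat{\mathbf{x}}(\ell)|^2$ (for each $t$) match on both sides. Concretely, I would test against specially chosen signals: first take $\mathbf{x}$ supported (on the Fourier side) only at $0$ to force $pM|\widehat{\mathbf{y}}(0)|^2=\alpha$; then for each fixed $t$, take $\widehat{\mathbf{x}}$ supported on $H_t$ to force $p\sum_{w\in H_t}|\widehat{\mathbf{y}}(w)|^2=\alpha$. Since the $a+1$ quantities $\{|\widehat{\mathbf{x}}(0)|^2\}\cup\{\sum_{\ell\in H_t}|\widehat{\mathbf{x}}(\ell)|^2\}_{t}$ can be chosen independently and nonnegatively (they partition the index set $\mathbb{Z}_p$), matching coefficients is both necessary and sufficient. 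Thus tightness holds iff there is a common value $\alpha$ with $pM|\widehat{\mathbf{y}}(0)|^2=\alpha=p\sum_{w\in H_t}|\widehat{\mathbf{y}}(w)|^2$ for every $t\in\{0,\dots,a-1\}$. Invoking $\widehat{\mathbf{y}}(0)=\frac{1}{\sqrt p}\sum_{k=0}^{p-1}\mathbf{y}(k)$, the equality $pM|\widehat{\mathbf{y}}(0)|^2=p\sum_{w\in H_t}|\widehat{\mathbf{y}}(w)|^2$ rewrites as $M\big|\sum_{k=0}^{p-1}\mathbf{y}(k)\big|^2=p\sum_{w\in H_t}|\widehat{\mathbf{y}}(w)|^2$, which is the stated condition; the requirement $\widehat{\mathbf{y}}(0)\neq0$ is exactly condition (i) of Theorem \ref{m.f.result}, ensuring $\alpha>0$ so that we genuinely have a frame, and (ii) of that theorem follows automatically from $\sum_{w\in H_t}|\widehat{\mathbf{y}}(w)|^2=M|\widehat{\mathbf{y}}(0)|^2\neq0$. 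Finally, the displayed value of $\alpha_{\mathbf{y}}$ is read off from $\alpha=pM|\widehat{\mathbf{y}}(0)|^2$, and the alternative expression $p\sum_{m\in\mathbb{M}}|\widehat{\mathbf{y}}(m)|^2$ is just the $t=0$ instance since $H_0=\mathbb{M}$.

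I expect the only mildly delicate point to be the justification that one may freely and independently prescribe the nonnegative numbers $|\widehat{\mathbf{x}}(0)|^2$ and $\sum_{\ell\in H_t}|\widehat{\mathbf{x}}(\ell)|^2$; this is immediate because $\{0\}\cup\{H_t\}_{t=0}^{a-1}$ is a partition of $\mathbb{Z}_p$ (Proposition \ref{part}(ii)), so one can place an arbitrary scalar on any single chosen Fourier coordinate in each block and zero elsewhere, then apply IDFT to obtain the corresponding $\mathbf{x}$. Everything else is bookkeeping with Plancherel and the norm formula already established.
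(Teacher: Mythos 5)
Your proposal is correct and matches the paper's intent: the paper proves Proposition \ref{tight} only by the remark that it follows ``by a similar argument used in Theorem \ref{m.f.result},'' i.e.\ by combining the norm identity of Theorem \ref{f.f.s.coset} with test signals whose Fourier transforms are supported at $0$ or on a single coset $H_t$, which is exactly what you do. Your write-up in fact supplies the details the paper omits, and all the individual steps (coefficient matching via the partition $\{0\}\cup\{H_t\}_{t=0}^{a-1}$, the rewriting $pM|\widehat{\mathbf{y}}(0)|^2=M\bigl|\sum_{k=0}^{p-1}\mathbf{y}(k)\bigr|^2$, and reading off $\alpha_{\mathbf{y}}$ from the $t=0$ coset $H_0=\mathbb{M}$) check out.
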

\begin{proof}
It can be proven by a similar argument used in Theorem \ref{m.f.result}.
\end{proof}
\noindent\textbf{Acknowledgement} 

\vspace{.5cm}
Some of the results are obtained during  the second author's appointment from the NuHAG group at the University of Vienna. We would like to thank Prof. Hans. G. Feichtinger for his valuable comments and the group for their hospitality.

\hspace{1in}

\end{document}